\newtheorem{thm}{Theorem}
\newtheorem{lem}[thm]{Lemma}
\newtheorem{cor}[thm]{Corollary}
\newtheorem{claim}{\small{Claim}}
\newcommand{\R}{
\mathcal{R}
}
\newcommand{\Rep}{
{\mathcal{R}_{\epsilon}}
}
\title{Packing and coloring $r$-bounded axis-parallel rectangles}
\author{Marco Caoduro\vspace{0.4em} \\
\small{Univ. Grenoble Alpes, Laboratoire G-SCOP, Grenoble-INP, Grenoble, France} \\ \small{\textit{\href{mailto:marco.caoduro@grenoble-inp.fr}{marco.caoduro@grenoble-inp.fr}}}}
\date{}
\begin{document}

\maketitle

\begin{abstract}

Let $\R$ be a family of axis-parallel rectangles in the plane. The \emph{transversal number} $\tau(\R)$ is the minimum number of points needed to pierce all the rectangles. The \emph{independence number} $\nu(R)$ is the maximum number of pairwise disjoint rectangles. Given a positive real number $r$, we say that $\R$ is an \emph{r-bounded} family if, for any rectangle in $\R$, the aspect ratio of the longer side over the shorter side is at most $r$.

Gy\'arf\'as and Lehel asked if it is possible to bound the transversal number $\tau(\R)$ with a linear function of the independence number $\nu(\R)$.
Ahlswede and  Karapetyan claimed a positive answer for the particular case of $r$-bounded families, but without providing a proof. Chudnovsky et al. confirmed the result proving the bound $\tau \leq (14 + 2r^2) \nu$. 

This note aims at giving a simple proof of $\tau \leq 2(r+1)(\nu-1) + 1$, slightly improving the previous results.
As a consequence of this new approach, we also deduce a constant factor bound for the ratio $\frac{\chi}{\omega}$ in the case of $r$-bounded family.
\end{abstract}

\section{Introduction}
\subsection{Basic definitions}

Let $\R$ be a family of axis-parallel rectangles in the plane and $r$ a positive real number. We say that $\R$ is an \emph{r-bounded} family if, for any rectangle in $\R$, the aspect ratio of the longer side over the shorter side is at most $r$.

We define some general parameters: 
the \emph{transversal number} $\tau(\R)$ is the minimum number of points needed to pierce all the rectangles in $\R$;
the \emph{independence number} $\nu(\R)$ is the maximum number of pairwise disjoint rectangles in $\R$;
the \emph{chromatic number} $\chi(\R)$ is the minimum number of classes in a partition of $\R$ into pairwise disjoint rectangles,
and the \emph{clique number} $\omega(\R)$ is the maximum number of pairwise intersecting rectangles in $\R$.
It is straightforward to see that $\tau(\R) \geq \nu(\R)$, since if we have a set of pairwise disjoint rectangles we need at least one point for each, and $\chi(\R) \geq \omega(\R)$, since if we have a set of pairwise intersecting rectangles we need a different color for each.

The \emph{intersection graph} $G(\R)$ is the graph having the set of axis-parallel rectangles as its vertex set and an edge for each pair of intersecting rectangles. 
It is easy to see the relations between $\nu(\R)$, $\chi(\R)$, $\omega(\R)$ and usual graph parameters $\alpha(G(\R)), \chi(G(\R)), \omega(G(\R))$ (see \cite{2005_Diestel}). Furthermore, due to the Helly property, $\tau(\R)$ corresponds to the minimum number of cliques covering all the vertices of $G(\R)$.
A family $\R$ is said to be \emph{$k$-degenerate}, if its intersection graph is $k$-degenerate, i.e., the minimum degree of any subgraph of $G(\R)$ is at most $k$.

\subsection{Packing results}
In 1965, Wegner \cite{1965_Wegner} conjectured that, for a family $\R$ of axis-parallel rectangles, we have $\tau(R) \leq 2\nu(R) - 1$. Gy\'arf\'as and Lehel \cite{1985_Gyarfas} asked whether there exists a constant $c$ such that $\tau(R) \leq c\nu(R)$.

Almost sixty years later, both problems are still open. Jel\'inek \cite{2015_Correa} constructed a family of rectangles which shows that $c\geq 2$. Moreover, Chen and Dumitrescu \cite{2020_Chen} and Seb\H{o} \cite{Andras_Note} recently found examples which prove that, if confirmed, Wegner's conjecture is tight for $\nu \leq 4$.

Besides these, there are several partial results in the literature. Kim et al. \cite{2006_Kim} give an answer to Gy\'arf\'as and Lehel's question in general for any set of homothetic images of a fixed convex body with a constant of 16. Moreover, Chan and Har-Peled \cite{2012_Chan} proved the corresponding statement for cross-free families of rectangles (i.e., families of rectangles in which the intersection of two rectangles is either empty or contains at least a corner of one of them).
Complementing this, Asplund and Gr\"unbaum \cite{1960_Asplund} observed that the intersection graph of a crossing family of rectangles $\R_c$ is perfect, so $\nu(\R_c) = \tau(\R_c)$.

Ahlswede and Karapetyan \cite{2006_Ahlswede} published a note with some results without providing the proofs. In \textit{Statement 1}, they claimed that for $r$-bounded families $\tau \leq 2(r+1)\nu$ holds, offering a particular solution of Gy\'arf\'as and Lehel's question. Lately, this statement was confirmed in a paper of Chudnovsky et al. \cite{2018_Chudnovsky} but with a larger factor $(14 + 2r^2)$. In this note we give a simple proof of $\tau \leq 2(r+1)(\nu-1) + 1$, slightly improving the previous results.

\subsection{Coloring results}
Another challenging problem in Geometric Combinatorics is relating the chromatic number and the clique number of family of convex bodies.
In 1948, Bielecki \cite{1948_Bielecki} asked if it is possible to bound the chromatic number of a family of axis-parallel rectangles using a function of the clique number. Asplund and Gr\"unbaum \cite{1960_Asplund} solved the problem providing a quadratic bound which remained asymptotically best for exactly sixty years. Only recently Chamerlsook and Walczak \cite{2020_Chalermsook} improved the result proving $\chi(\R) = \mathcal{O} \left (\omega(\R) \log(\omega(\R)) \right )$.
For a lower bound, Krawczyk and Walczak \cite{2017_Krawczyk} constructed a family $\R$ of axis-parallel rectangles with $\chi(\R) = 3\omega(\R) - 2$  providing the best known lower bound for the problem.

According to the current state of the art and our personal experience, it seems reasonable to ask whether $\frac{\chi}{\omega}$ can be bounded by a constant for families of axis-parallel rectangles.
As an addition to our main theorem, we observe that our proof technique implies a positive answer for $r$-bounded families.

\section{Results}
Let $\R$ be a family of axis-parallel rectangles.
Denote for brevity $\tau = \tau(\R)$ and $\nu = \nu(\R)$.

\begin{thm} \label{Thm_Pac}
	Let $\mathcal{R}$ be a set of axis-parallel rectangles and $r$ be the maximum aspect ratio of a rectangle in $\R$.
	Then,
    $	\tau \leq 2(r+1)(\nu-1) + 1.	$
\end{thm}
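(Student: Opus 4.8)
The plan is to prove the inequality by induction on $\nu$, assuming $\R$ is finite and nonempty. For the base case $\nu = 1$, all rectangles of $\R$ pairwise intersect, so by the Helly property for axis-parallel rectangles they have a common point and $\tau = 1 = 2(r+1)(\nu-1)+1$. For the inductive step ($\nu \ge 2$) the goal is to produce a set $P$ of at most $2(r+1)$ points meeting every rectangle that meets some well-chosen $R_0 \in \R$; writing $\R' = \{R \in \R : R \cap P = \emptyset\}$, every member of $\R'$ is then disjoint from $R_0$, so adjoining $R_0$ to any pairwise disjoint subfamily of $\R'$ keeps it pairwise disjoint, whence $\nu(\R') \le \nu - 1$. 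Combining $\tau \le |P| + \tau(\R')$ with the induction hypothesis $\tau(\R') \le 2(r+1)(\nu - 2) + 1$ (trivially true when $\R' = \emptyset$) yields $\tau \le 2(r+1) + 2(r+1)(\nu-2) + 1 = 2(r+1)(\nu-1)+1$, as wanted.

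For the choice of $R_0$, take a rectangle $R_0 = [x_1,x_2] \times [y_1,y_2]$ whose shorter side, of length $s$, is minimum over all members of $\R$; by symmetry we may assume this side is vertical, so $y_2 - y_1 = s$, and the aspect-ratio hypothesis gives $x_2 - x_1 \le rs$. The first point to establish is that \emph{every rectangle meeting $R_0$ meets the bottom edge $[x_1,x_2]\times\{y_1\}$ or the top edge $[x_1,x_2]\times\{y_2\}$}: if $R = [a,b]\times[c,d]$ meets $R_0$, then $[c,d]$ meets $[y_1,y_2]$, and since $d - c$ is at least the shorter side of $R$, hence at least $s = y_2 - y_1$, a one-line interval argument forces $y_1 \in [c,d]$ or $y_2 \in [c,d]$; combined with $[a,b]\cap[x_1,x_2]\neq\emptyset$, this puts a point of $R$ on the corresponding horizontal edge of $R_0$. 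It therefore suffices to pierce, separately for each of the two edges, all rectangles of $\R$ meeting it, using at most $r+1$ points per edge.

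Consider the bottom edge; a rectangle meeting it has horizontal projection an interval of length at least $s$ meeting the segment $[x_1,x_2]$, which has length at most $rs$. Place points on the line $y = y_1$ at $x_1, x_1 + s, x_1 + 2s, \dots$, up to the first such value that is at least $x_2$ — that is, $\lceil (x_2-x_1)/s \rceil + 1 \le r+1$ points. One checks that every interval of length at least $s$ meeting $[x_1,x_2]$ contains one of these abscissae: if its left endpoint is at most $x_1$ it contains $x_1$, and otherwise its left endpoint lies in $(x_1,x_2]$ and, the interval being longer than the grid spacing, it contains the next grid abscissa to the right. Doing the same on the line $y = y_2$ for the top edge and letting $P$ be the union of the two point sets gives $|P| \le 2(r+1)$ with $P$ meeting every rectangle that meets $R_0$, which is exactly what the inductive step needs.

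I expect the routine parts to be the two one-dimensional interval lemmas; the step deserving the most care is the count in the third paragraph — keeping the number of points on each line down to $r+1$, which is precisely where $x_2 - x_1 \le rs$ enters — together with the verification that discarding all rectangles met by $P$ drops $\nu$ by exactly one, so that the recursion telescopes to the claimed constant.
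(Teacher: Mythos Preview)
Your proof is correct and follows essentially the same approach as the paper: induct on $\nu$, pick a rectangle $R_0$ of minimum shorter side $s$, place $r+1$ points on each of its two long edges so that every rectangle meeting $R_0$ is pierced, and recurse on the remainder. The only cosmetic difference is that the paper spaces its points at distance $b_{R_0}/r$ (dividing the long edge into $r$ equal cells) and argues via ``contains a corner of a small cell,'' whereas you space at distance $s$ and argue in two steps (first hit an edge, then hit a grid point); both give the same count and the same conclusion.
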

\begin{proof}
	We prove the result by induction on $\nu$.
	If $\nu = 1$, then by the Helly property $\tau = 1$, so the inequality holds. \\
	Assume now that $\nu \geq 2$ and the result is true for every family having $\nu' < \nu$.
	Let $\epsilon := \min_{R \in \mathcal{R}} \{ a_R \}$ where $a_R$ is the length of the smaller side of $R$ and let $R_\epsilon$ be the rectangle which realizes this minimum.
	We assume, without loss of generality, that $a_\Rep = \epsilon$ is the height of $R_\epsilon$ and $b_\Rep$ is its width.
	Define $P := \{p_1, ... \ , p_{r+1}, q_1, ... \ , q_{r+1} \}$ as the set of $2(r+1)$ points which subdivide the upper and the lower sides of $R_\epsilon$ into segments of length at most $\frac{b_\Rep}{r} \leq \epsilon$, as showed in Figure \ref{fig::points}.

	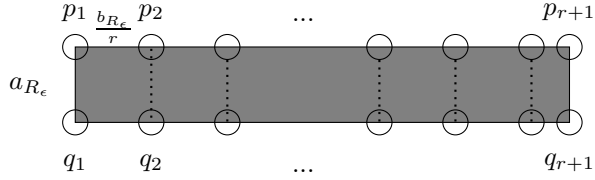
\begin{figure}[h]
	\centering
		\begin{tikzpicture} 

	\filldraw [fill=gray] (0,0) rectangle (6.5,-1);
	
    \node[shape=circle,draw=black] (1) at (0,0) {};
    \node[label={$p_1$}] (1') at (0,0.1) {};
    \node[shape=circle,draw=black] (2) at (1,0) {};
    \node[label={$p_2$}] (1') at (1,0.1) {};
    \draw[dotted, thick] (1') -- (1,-1);
    \node[shape=circle,draw=black] (3) at (2,0) {};
    \draw[dotted, thick] (3) -- (2,-1);
    \node[label={...}] (1') at (3,0.1) {};
    \node[shape=circle,draw=black] (5) at (4,0) {};
    \draw[dotted, thick] (5) -- (4,-1);
    \node[shape=circle,draw=black] (6) at (5,0) {};
    \draw[dotted, thick] (6) -- (5,-1);
    \node[shape=circle,draw=black] (7) at (6,0) {};
    \draw[dotted, thick] (7) -- (6,-1);
    \node[shape=circle,draw=black] (7) at (6.5,0) {};
    \node[label={$p_{r+1}$}] (1') at (6.5,0.1) {};

	\node[shape=circle,draw=black] (1) at (0,-1) {};
	\node[label={$q_1$}] (1') at (0,-1.9) {};
	\node[shape=circle,draw=black] (1) at (1,-1) {};
	\node[label={$q_2$}] (1') at (1,-1.9) {};
	\node[label={...}] (1') at (3,-1.9) {};
	\node[shape=circle,draw=black] (1) at (2,-1) {};
	\node[shape=circle,draw=black] (1) at (4,-1) {};
	\node[shape=circle,draw=black] (1) at (5,-1) {};
	\node[shape=circle,draw=black] (1) at (6,-1) {};
	\node[shape=circle,draw=black] (7') at (6.5,-1) {};
	\node[label={$q_{r+1}$}] (1') at (6.5,-1.9) {};

	\node[label={$a_{R_\epsilon}$}] (0) at (-0.6,-0.9) {};
	\node[label={[font=\footnotesize]$\frac{b_{R_\epsilon}}{r}$}] (0') at (0.5,-0.2) {};

\end{tikzpicture}
		\caption{Definition of $P$}
		\label{fig::points}
	\end{figure}
	
	\begin{claim} \label{R_P}
		If $R \in \R$ and $R \cap R_\epsilon \neq \emptyset$, then $R \cap P \neq \emptyset$.
	\end{claim}
	Indeed, since $R \cap R_\epsilon \neq \emptyset$, then $R$ intersects at least one of the $r$ rectangles with corner points in $P$ partitioning $\R_{\epsilon}$ (Figure \ref{fig::points}).
	We denote this rectangle as $P(R)$: $P(R) \cap R \neq \emptyset$. Both sides of $P(R)$ are of length at most $\epsilon$.
	Since both sides of $R$ are of length at least $\epsilon$, $R$ contains a corner of $P(R)$.
	Hence, $R \cap P \neq \emptyset$ and the claim is proved.
	
	\begin{claim}
		Let $\R' := \{ R \in \R, \ R \cap R_\epsilon = \emptyset \}$, then:
		\begin{enumerate}[a)]
		\item
		$\nu(\mathcal{R}) \geq \nu(\mathcal{R}') + 1$;
		\item
		$\tau(\R) \leq \tau(\R') + 2(r+1)$.
		\end{enumerate}
	\end{claim}
	Indeed, $a)$ follows by noting that adding $R_{\epsilon}$ to any independent set of $\R'$, we get an independent set again;
	$b)$ is an immediate consequence of Claim 1, since $|P| = 2r +1$.

	Claim 2 finishes the proof of Theorem \ref{Thm_Pac} by induction.
\end{proof}

\begin{cor}
	Let $\mathcal{S}$ be a family of squares in the plane, then
	$
		\tau(\mathcal{S}) \leq 4 \nu(\mathcal{S}) - 3.
	$
\end{cor}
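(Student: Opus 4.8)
The plan is to specialize Theorem~\ref{Thm_Pac} to the case $r=1$. A square has both sides equal, so the ratio of its longer side to its shorter side is exactly $1$; hence any family $\mathcal{S}$ of axis-parallel squares is, in particular, a $1$-bounded family of axis-parallel rectangles. Applying Theorem~\ref{Thm_Pac} with $r=1$ therefore gives $\tau(\mathcal{S}) \leq 2(1+1)(\nu(\mathcal{S})-1)+1 = 4\nu(\mathcal{S})-3$, which is exactly the claimed bound.

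It is worth spelling out why the inductive argument is even cleaner in this setting, which is the second step I would include. Take $R_\epsilon$ to be a smallest square of $\mathcal{S}$, say of side length $\epsilon$. Because $R_\epsilon$ is itself a square, the set $P$ from the proof of Theorem~\ref{Thm_Pac} can be taken to be precisely the four corners of $R_\epsilon$: any square $S \in \mathcal{S}$ meeting $R_\epsilon$ has side length at least $\epsilon$, so it must contain a corner of $R_\epsilon$, i.e.\ $S \cap P \neq \emptyset$. Letting $\mathcal{S}' := \{S \in \mathcal{S} : S \cap R_\epsilon = \emptyset\}$, we obtain $\nu(\mathcal{S}) \geq \nu(\mathcal{S}')+1$ and $\tau(\mathcal{S}) \leq \tau(\mathcal{S}') + 4$, while the base case $\nu = 1$ gives $\tau = 1$ by the Helly property; induction on $\nu$ then yields $\tau(\mathcal{S}) \leq 4\nu(\mathcal{S})-3$.

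I do not expect any real obstacle here: the corollary is an immediate instance of the main theorem, and the only things to verify are that squares are $1$-bounded and that $2(1+1)(\nu-1)+1$ simplifies to $4\nu-3$. The genuinely hard direction — improving the leading constant $4$, or even deciding whether Wegner's bound $\tau \leq 2\nu-1$ holds for families of squares — lies well beyond what this elementary packing argument can reach.
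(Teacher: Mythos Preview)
Your proof is correct and matches the paper's approach: the corollary is stated immediately after Theorem~\ref{Thm_Pac} with no separate argument, being the straightforward specialization to $r=1$. Your additional paragraph re-running the induction with $P$ equal to the four corners of the smallest square is a faithful unpacking of that same argument in the square case, not a different method.
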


In the proof of Theorem \ref{Thm_Pac}, Claim 1 shows that every rectangle intersecting $R_{\epsilon}$ contains a point of $P$. 
Since any point can be contained in at most $\omega(\R)$ elements of $\R$,
$R_{\epsilon}$ has at most $2(r + 1)(\omega(\R)-1)$ neighbors. We can conclude the following:

\begin{lem} \label{LemDeg}
Let $G$ be the intersection graph of an $r$-bounded family of axis-parallel rectangles.
Then, $G$ is $2(r + 1)(\omega(G)-1)$-degenerate.
\end{lem}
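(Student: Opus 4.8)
The plan is to verify the definition of $k$-degeneracy directly: I must show that every subgraph of $G$ contains a vertex of degree at most $2(r+1)(\omega(G)-1)$. It suffices to check induced subgraphs, since for any subgraph $H$ of $G$ a vertex of small degree in the induced subgraph $G[V(H)]$ also has small degree in $H$; and an induced subgraph of $G$ is exactly the intersection graph of a subfamily $\R' \subseteq \R$. So I would fix an arbitrary nonempty subfamily $\R'$ and exhibit a rectangle of $\R'$ with few neighbours inside $\R'$.

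The rectangle to use is the one singled out in the proof of Theorem \ref{Thm_Pac}: let $R_\epsilon \in \R'$ realise the minimum, over $\R'$, of the shorter side length $\epsilon$, and construct the same set $P$ of $2(r+1)$ points subdividing the top and bottom edges of $R_\epsilon$ into pieces of length at most $\epsilon$. The crucial observation is that Claim \ref{R_P} is local: its proof only uses that $R_\epsilon$ attains the minimum shorter side within the family under consideration and that consequently every rectangle of that family has both sides of length at least $\epsilon$. Both facts hold for $\R'$ with this (possibly new) value of $\epsilon$, so Claim \ref{R_P} applies verbatim: every $R \in \R'$ meeting $R_\epsilon$ contains a point of $P$.

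Next I would estimate, for each fixed $p \in P$, how many rectangles of $\R'$ contain $p$. Rectangles through a common point are pairwise intersecting, hence form a clique in the intersection graph, so at most $\omega(G)$ of them contain $p$ (using monotonicity of the clique number under passing to subfamilies). Since $p$ lies on the boundary of $R_\epsilon$, that clique includes $R_\epsilon$ itself, leaving at most $\omega(G)-1$ rectangles distinct from $R_\epsilon$ through $p$. Summing over the $2(r+1)$ points of $P$ and invoking Claim \ref{R_P} to cover all neighbours of $R_\epsilon$, I get that $R_\epsilon$ has at most $2(r+1)(\omega(G)-1)$ neighbours in $\R'$, which is the low-degree vertex needed.

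There is no genuinely hard step here — the argument is essentially a re-reading of Claim \ref{R_P} from a degeneracy viewpoint. The only points requiring a little care are the reduction to induced subgraphs (so that the object under consideration really is the intersection graph of a subfamily), the use of $\omega$ of the subfamily being at most $\omega(G)$, and checking that re-selecting $\epsilon$ for the subfamily keeps the side-length comparison in Claim \ref{R_P} valid.
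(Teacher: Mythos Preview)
Your proposal is correct and follows essentially the same approach as the paper: the paper's entire argument for the lemma is the one-paragraph remark preceding it, namely that by Claim~\ref{R_P} every neighbour of $R_\epsilon$ contains a point of $P$, and each point lies in at most $\omega$ rectangles, so $R_\epsilon$ has at most $2(r+1)(\omega-1)$ neighbours. You have merely made explicit the passage to an arbitrary subfamily (and the re-selection of $\epsilon$) that the paper leaves implicit in concluding degeneracy.
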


\begin{thm} \label{Thm_Col}
    Let $\mathcal{R}$ be a set of axis-parallel rectangles and $r$ be the maximum aspect ratio of a rectangle in $\R$.
	Then,
    $
		\chi(\R) \leq 2(r+1)(\omega(\R)-1) + 1.
	$
\end{thm}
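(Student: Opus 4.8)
The plan is to deduce Theorem \ref{Thm_Col} directly from Lemma \ref{LemDeg} via the standard greedy colouring bound for degenerate graphs. Recall that if a graph $G$ is $k$-degenerate, then $\chi(G) \le k+1$: one orders the vertices so that each vertex has at most $k$ neighbours among those appearing earlier (such an order exists by repeatedly peeling off a vertex of minimum degree), and then colours greedily in that order, so every vertex sees at most $k$ already-coloured neighbours and a free colour among $\{1,\dots,k+1\}$ is always available.

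Concretely, I would first invoke Lemma \ref{LemDeg}: the intersection graph $G = G(\R)$ of the $r$-bounded family $\R$ is $2(r+1)(\omega(G)-1)$-degenerate. Since $\chi(\R) = \chi(G(\R))$ and $\omega(\R) = \omega(G(\R))$, applying the greedy bound with $k = 2(r+1)(\omega(\R)-1)$ immediately gives
\[
\chi(\R) \;\le\; 2(r+1)(\omega(\R)-1) + 1,
\]
which is exactly the claimed inequality. One small point worth spelling out: degeneracy is inherited by all subgraphs, and the bound $k = 2(r+1)(\omega(G)-1)$ used when peeling off a minimum-degree vertex of a subgraph $H \subseteq G$ is safe because $\omega(H) \le \omega(G)$, so the degeneracy witness from Lemma \ref{LemDeg} (which is phrased for the whole family, but whose proof via Claim 1 applies verbatim to any subfamily) is uniform over all subgraphs. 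This is the only thing to be careful about, and it is essentially immediate.

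I do not anticipate a real obstacle here; the content has already been extracted into Lemma \ref{LemDeg}, and what remains is the textbook fact that $k$-degenerate graphs are $(k+1)$-colourable. If one wanted a self-contained argument avoiding the explicit appeal to degeneracy, one could instead re-run the induction of Theorem \ref{Thm_Pac}: order the rectangles by increasing smaller-side length, and colour them one at a time in that order; when rectangle $R$ is coloured, its already-coloured neighbours all have smaller-or-equal smaller side, hence (by the argument of Claim 1 applied with $R$ in the role of $R_\epsilon$) each such neighbour contains one of the $2(r+1)$ subdivision points on the boundary of $R$, and since each point lies in at most $\omega(\R)$ rectangles, $R$ has at most $2(r+1)(\omega(\R)-1)$ earlier neighbours, leaving a colour free in a palette of size $2(r+1)(\omega(\R)-1)+1$. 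Either route completes the proof.
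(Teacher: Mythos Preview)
Your proposal is correct and follows exactly the paper's approach: invoke Lemma~\ref{LemDeg} to get $2(r+1)(\omega-1)$-degeneracy, then apply the standard fact that $k$-degenerate graphs are $(k+1)$-colourable. The paper's proof is the one-line version of what you wrote; your additional remarks on subgraph inheritance and the alternative direct induction are sound but not needed.
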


\begin{proof}
    It is enough to use Lemma \ref{LemDeg} and notice that any $k$-degenerate graph has chromatic number at most $k+1$. 
\end{proof}

Theorem \ref{Thm_Col}, together with the following result of Perepelitsa \cite{2003_Perepelitsa}, directly implies another result for squares.

\begin{thm} (Theorem 7, Corollary 8 \cite{2003_Perepelitsa}) \label{PerepThm}
Let G be the triangle-free intersection graph of a finite number of rectangles.
If the intersection of any pair of rectangles is either empty or contains at least a corner of one of them, then G is a plane graph.
Moreover, G is 3-colorable.
\end{thm}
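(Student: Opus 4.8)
\medskip

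\noindent\textbf{Proof proposal.} The plan is to prove the planarity statement by producing an explicit embedding, and then to obtain $3$-colourability for free. Before starting I would perturb the rectangles by an arbitrarily small amount so that the family is in \emph{general position} --- whenever a corner of one rectangle lies in another it lies in its interior, and disjoint rectangles are at positive distance --- using that a small enough perturbation leaves the intersection graph $G$ unchanged.

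The engine of the argument is that $G$ is triangle-free, so no point of the plane lies in three of the rectangles (three rectangles through a common point pairwise intersect and form a triangle); thus the arrangement has depth at most $2$. For an edge $RS$ of $G$ write $C_{RS}:=R\cap S$ for its \emph{cell}. I would extract two facts. (i) The cells of distinct edges are pairwise disjoint as closed sets, since a common point of two of them lies in at least three rectangles; in particular the cells lying inside a fixed rectangle $R$ are pairwise disjoint. (ii) No cell disconnects its host rectangle: in general position the corner hypothesis gives, for each edge $RS$, a corner of $C_{RS}$ that is either interior to $R$ or is a corner of $R$, and a closed sub-rectangle of $R$ having a corner of either of these two kinds never separates $R$; combined with (i), the cells inside $R$ are pairwise disjoint and individually non-separating, hence their union does not separate $R$ either (Janiszewski's theorem). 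Consequently, unless $R$ is contained in one of its neighbours, the \emph{private region} $R^{\circ}:=R\setminus\bigcup_{S\sim R}C_{RS}$ has nonempty and connected interior; moreover the regions $R^{\circ}$ are pairwise disjoint, because a point of $R^{\circ}$ lies in $R$ and in no other rectangle.

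Now I would draw $G$ as follows: pick $p_R\in\operatorname{int}(R^{\circ})$ for each vertex and $m_{RS}\in\operatorname{int}(C_{RS})$ for each edge; join $p_R$ to $m_{RS}$ by an arc that stays inside $\operatorname{int}(R^{\circ})$ until it dips into $C_{RS}$, join $m_{RS}$ to $p_S$ symmetrically, and let the drawn edge be the concatenation, which lies in $R^{\circ}\cup C_{RS}\cup S^{\circ}$. Two drawn edges sharing a vertex $R$ then meet only at $p_R$: inside the connected region $\operatorname{int}(R^{\circ})$ the two initial arcs can be routed internally disjoint, and afterwards the two edges lie in disjoint cells at $R$ and in disjoint private regions at their other ends. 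Two drawn edges with no common vertex do not meet at all, since by (i) their supporting sets $R^{\circ}\cup C_{RS}\cup S^{\circ}$ and $R'^{\circ}\cup C_{R'S'}\cup S'^{\circ}$ are disjoint. A rectangle contained in a neighbour is a leaf of $G$, and its vertex can simply be placed inside the containing rectangle. This yields a plane embedding of $G$. Finally, a triangle-free planar graph is $3$-colourable by Gr\"otzsch's theorem, which gives the second conclusion.

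The part I expect to be real work is the topology in the second and third paragraphs: checking carefully that $\operatorname{int}(R^{\circ})$ is connected (and two-dimensional) in every configuration the hypotheses allow --- in particular when a cell is flush with a side of $R$, or when the corner hypothesis is witnessed only on a boundary --- and verifying that the edge-arcs can honestly be chosen pairwise non-crossing. Making the general-position reduction rigorous, i.e.\ perturbing so that every corner incidence becomes strict while the intersection graph is preserved, is the other point that deserves care, although it should be routine.
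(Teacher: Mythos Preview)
The paper does not prove this statement at all: Theorem~\ref{PerepThm} is quoted verbatim as Perepelitsa's result and is used only as a black box to derive the corollary on squares. There is therefore no proof in the paper to compare your proposal against.

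For what it is worth, your outline is a sound route to the planarity claim. The two substantive steps both check out: the Helly property for axis-parallel rectangles turns triangle-freeness into the depth-$\le 2$ condition you use for (i), and under the corner hypothesis (after your perturbation) each cell $C_{RS}$ genuinely has a corner that is either a corner of $R$ or lies in $\operatorname{int}(R)$, which is exactly what forces $R\setminus C_{RS}$ to be connected. Your treatment of the degenerate case $R\subset S$ is also correct: such an $R$ must be a leaf, since any further neighbour $T$ of $R$ would meet $S$ as well and create a triangle. The appeal to Gr\"otzsch for the $3$-colourability is the standard finishing move. The only places that need to be written out carefully are the ones you already flag: the inductive/Janiszewski step showing that a finite disjoint union of non-separating cells is still non-separating, and the routing of the half-edges inside each $\operatorname{int}(R^{\circ})$ so that edges incident to the same vertex meet only at $p_R$.
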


\begin{cor}[Statement 3 \cite{2006_Ahlswede}]
    Let $\mathcal{S}$ be a family of squares in the plane, then
	$
		\chi(\mathcal{S}) \leq 4 \omega(\mathcal{S}) - 3\
	$.
	Moreover, if $G(\mathcal{S})$ is triangle-free, then $\chi(\mathcal{S}) \leq 3$.
\end{cor}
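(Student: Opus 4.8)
The plan is to instantiate Theorem~\ref{Thm_Col} at aspect ratio $r=1$ and combine it with Theorem~\ref{PerepThm}. Every square has aspect ratio exactly $1$, so any family of squares $\mathcal{S}$ is $1$-bounded; substituting $r=1$ into Theorem~\ref{Thm_Col} immediately gives $\chi(\mathcal{S}) \leq 2(1+1)(\omega(\mathcal{S})-1)+1 = 4\omega(\mathcal{S})-3$, which is the first inequality. (Equivalently one could invoke the $r=1$ case of Lemma~\ref{LemDeg} together with the fact that a $k$-degenerate graph is $(k+1)$-colorable.)

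For the ``moreover'' part the idea is to apply Theorem~\ref{PerepThm}, so the only thing left to check is that a family of squares satisfies its hypothesis: whenever two squares $S,S'\in\mathcal{S}$ meet, one of them contains a corner of the other. Suppose to the contrary that $S\cap S'\neq\emptyset$ but neither square contains a corner of the other, and write $I=S\cap S'$ for the nonempty intersection rectangle. Each of the four corners of $I$ is formed by one horizontal and one vertical edge; if both came from the same square, that corner would be a corner of that square lying inside the other, a contradiction. A short case analysis on which square contributes the left and the right edge of $I$ (the ``mixed'' cases already produce such a forbidden corner) shows that, after possibly swapping $S$ and $S'$, one must have $\pi_x(S)\subsetneq\pi_x(S')$ and $\pi_y(S')\subsetneq\pi_y(S)$, where $\pi_x,\pi_y$ denote the projections onto the horizontal and vertical axes. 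Comparing side lengths, the first inclusion forces $\mathrm{side}(S)<\mathrm{side}(S')$ and the second forces $\mathrm{side}(S')<\mathrm{side}(S)$, which is absurd. Hence the corner hypothesis holds for $\mathcal{S}$.

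Now suppose in addition that $G(\mathcal{S})$ is triangle-free. By the previous paragraph $\mathcal{S}$ satisfies the hypothesis of Theorem~\ref{PerepThm}, so $G(\mathcal{S})$ is a plane graph and is $3$-colorable; that is, $\chi(\mathcal{S})\leq 3$. (If $\mathcal{S}$ is infinite, apply this to every finite subfamily and conclude with the De~Bruijn--Erd\H{o}s compactness theorem.)

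The only genuine work is the geometric claim of the second paragraph --- that two axis-parallel squares can never cross in the ``plus-sign'' fashion available to general rectangles --- and this is exactly where the hypothesis that the bodies are squares rather than arbitrary rectangles is used, in order to unlock Perepelitsa's structural theorem; everything else is substitution and citation.
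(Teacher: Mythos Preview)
Your proof is correct and follows exactly the route the paper indicates: instantiate Theorem~\ref{Thm_Col} at $r=1$ for the first inequality, and invoke Theorem~\ref{PerepThm} for the triangle-free case. The paper leaves implicit the check that two intersecting squares cannot cross (so that Perepelitsa's corner hypothesis applies), whereas you spell it out; the only slip is that your projection inclusions need not be strict, but then $\mathrm{side}(S)\le\mathrm{side}(S')\le\mathrm{side}(S)$ forces $S=S'$, which is the same contradiction.
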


\section{Conclusion}
In this note we offer a simple proof and a slight improvement of a result often cited in the literature but so far without proof. In addition, we state some new results about the chromatic number of $r$-bounded families.

Finally, we would like to point out that a further improvement of the bound proposed by Theorem \ref{Thm_Pac}, either replacing $r+1$ by $r$ or removing the multiplicative factor 2, would lead to an affirmative answer of Wegner's conjecture for families of squares.

\renewcommand{\abstractname}{Acknowledgements}
\begin{abstract}
I would like to thank my thesis supervisor Andr\'as Seb\H{o}, for providing guidance and insightful feedback throughout the writing of this note.
\end{abstract}

\printbibliography[title=References]

\end{document}